\newtheorem*{theorem*}{Theorem}
\newtheorem{theorem}{Theorem}
\newtheorem{lemma}{Lemma}
\newtheorem{corollary}{Corollary}
\theoremstyle{remark}
\newtheorem{remark}{Remark}
\numberwithin{equation}{section}
\numberwithin{corollary}{theorem}
\numberwithin{remark}{section}
\numberwithin{assumption}{section}
\newcommand{\Z}{\mathbb{Z}}
\newcommand{\Q}{\mathbb{Q}}
\newcommand{\C}{\mathbb{C}}
\newcommand{\N}{\mathbb{N}}
\newcommand{\F}{\mathbb{F}}
\begin{document}
\title[On some symmetries of the base $ n $ expansion of $ 1/m $ - 2]{On some symmetries of the base $ n $ expansion of $ 1/m $ : The Class Number connection}

\author{Kalyan Chakraborty}
\email[Kalyan Chakraborty]{kalychak@ksom.res.in}
\address{ Kerala School of Mathematics, KCSTE,
	Kunnamangalam, Kozhikode, Kerala, 673571, India.}

\author{Krishnarjun Krishnamoorthy}
\email[Krishnarjun K]{krishnarjunk@hri.res.in, krishnarjunmaths@gmail.com}
\address{Harish-Chandra Research Institute,  A CI of Homi Bhabha National
	Institute, Chhatnag Road, Jhunsi, Prayagraj - 211019, India.}

\keywords{Class Numbers, $ 3 $ - Divisibility, Quadratic fields, Base $ n $ representation.}
\subjclass[2020] {11R29, 11A07.}
\maketitle

\begin{abstract}
	Suppose that $ m\equiv 1\mod 4 $ is a prime and that $ n\equiv 3\mod 4 $ is a primitive root modulo $ m $. In this paper we obtain a relation between the class number of the imaginary quadratic field $ \Q(\sqrt{-nm}) $ and the digits of the base $ n $ expansion of $ 1/m $.
	
	Secondly, if $ m\equiv 3\mod 4 $, we study some convoluted sums involving the base $ n $ digits of $ 1/m $ and arrive at certain congruence relations involving the class number of $ \Q(\sqrt{-m}) $ modulo certain primes $ p $ which properly divide $ n+1 $.
\end{abstract}

\section{Introduction}\label{Section "Introduction"}

Class numbers are mysterious objects which occur naturally in number theory. We shall quickly glance through landmark results which are relevant to the present work and refer the reader to \cite{Bha-Mur} and the references therein for further information.

Suppose that $ D>0 $ is square free and suppose we denote the class number of $ \Q(\sqrt{-D}) $ as $ h(D) $. Concerning the size of $ h(D) $, it is known that \cite{Siegel}, for every $ \epsilon >0 $ there exists constants $ c_1,c_2 $ such that 
\begin{equation}\label{Equation "Class number Growth"}
	c_1 D^{1/2-\epsilon} < h(D) <c_2 D^{1/2+\epsilon}.
\end{equation}

The next object of attention is the divisibility properties of $ h(D) $. Below we mention two results, one regarding non divisibility \cite{Kohnen-Ono} and another regarding divisibility \cite{Sound - divisibility} of class numbers.

\begin{theorem*}[Kohnen-Ono]\label{Theorem "Kohnen-Ono"}
	Suppose that $ \ell >3 $ is a prime, then for every $ \epsilon>0 $ and large enough $ X $, we have 
	\[
	\#\{0 < D < X\ |\ \ell\nmid h(D)\} \geq \left(\frac{2(\ell-2)}{\sqrt{3}(\ell-1)} - \epsilon\right) \frac{\sqrt{X}}{\log(X)}.
	\]
\end{theorem*}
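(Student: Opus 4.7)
\emph{Proof proposal.} The plan is to exploit the classical connection between class numbers of imaginary quadratic fields and Fourier coefficients of a half-integral weight modular form, then descend to a congruence modulo $\ell$. Concretely, I would work with Cohen's Eisenstein series $\mathcal{H}_{(\ell+1)/2}$ of weight $(\ell+1)/2$ on $\Gamma_0(4)$, whose $D$-th Fourier coefficient is Cohen's generalized class number $H\!\left(\tfrac{\ell-1}{2}, D\right)$. A direct computation with Cohen's explicit formula shows that for fundamental discriminants $-D$ with $\gcd(D,\ell)=1$,
\[
H\!\left(\tfrac{\ell-1}{2}, D\right) \;\equiv\; c_\ell \cdot h(D) \pmod{\ell},
\]
for a fixed nonzero constant $c_\ell$ depending only on $\ell$. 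Thus $\ell$-divisibility of $h(D)$ can be read directly off the reduction of $\mathcal{H}_{(\ell+1)/2}$ modulo $\ell$.

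Next, I would verify that this mod $\ell$ reduction is not identically zero as a modular form over $\overline{\mathbb{F}}_\ell$. The cleanest route is to exhibit a single small fundamental discriminant $-D_0$ for which $\ell \nmid h(D_0)$, then appeal to Sturm's bound to conclude nontriviality of the full form. Once nontriviality is established, one invokes a result in the spirit of Ono--Skinner asserting that a nonzero half-integral weight modular form modulo $\ell$ has a \emph{positive proportion} of nonzero Fourier coefficients supported on square-free arguments; schematically, this is a Galois-representation / Hecke-operator argument combined with the Shimura lift.

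Finally, the explicit constant $\tfrac{2(\ell-2)}{\sqrt{3}(\ell-1)}$ needs to be extracted. The factor $(\ell-2)/(\ell-1)$ represents the density of residue classes modulo $\ell$ accessible to the Eisenstein part of the form (all nonzero residues except the one class that would encode divisibility by $\ell$), while $2/\sqrt{3}$ together with the $\sqrt{X}/\log X$ scaling emerges from the Siegel-type growth \eqref{Equation "Class number Growth"} and from a counting argument for square-free fundamental discriminants in residue classes modulo $4\ell$.

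The main obstacle is this last step: upgrading the qualitative conclusion "positive density of nonvanishing coefficients'' to the precise explicit constant. This forces one to combine an effective form of Sturm's bound, bounds on the dimension of the ambient space of half-integral weight modular forms mod $\ell$, and a uniform analytic estimate (essentially a Wirsing / Mertens-type lower bound) on partial sums of the indicator function of square-free integers restricted to prescribed congruence classes modulo $4\ell$. Any slack in these estimates translates directly into a weaker constant, so matching the stated $\tfrac{2(\ell-2)}{\sqrt{3}(\ell-1)}$ requires that each of these ingredients be treated sharply rather than asymptotically.
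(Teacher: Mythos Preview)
The paper does not contain a proof of this statement. The Kohnen--Ono theorem is quoted in \S\ref{Section "Introduction"} purely as background, with attribution to \cite{Kohnen-Ono}; no argument is given or even sketched. So there is no ``paper's own proof'' against which your proposal can be compared.

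That said, a brief comment on your sketch as it relates to the actual source \cite{Kohnen-Ono}. The broad architecture you describe---encode $h(D)$ in the Fourier coefficients of a half-integral weight form, reduce modulo $\ell$, check the reduction is nonzero, and then invoke a density result for nonvanishing coefficients---is indeed the shape of the Kohnen--Ono argument. Two places where your outline drifts: (i) Kohnen and Ono do not work with Cohen's $\mathcal{H}_{(\ell+1)/2}$ but rather with the weight $3/2$ Hurwitz--Eisenstein series (and certain twists), so the congruence $H(\tfrac{\ell-1}{2},D)\equiv c_\ell h(D)\pmod{\ell}$ is not the mechanism they use; (ii) your account of the constant $\tfrac{2(\ell-2)}{\sqrt{3}(\ell-1)}$ is speculative---in the original paper the $\sqrt{X}/\log X$ and the explicit constant come from a quantitative version of a theorem of Ono--Skinner on Fourier coefficients of half-integral weight forms modulo $\ell$, combined with a careful count of admissible fundamental discriminants, not from Siegel's bound \eqref{Equation "Class number Growth"}. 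Your last paragraph correctly identifies this as the hardest step, but as written the proposal does not actually carry it out; it only names the ingredients.

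In short: nothing to compare within this paper, and your sketch is in the right family of ideas but would need to be realigned with the actual modular form used and made precise at the constant-extraction step before it could stand as a proof.
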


\begin{theorem*}[Soundararajan]\label{Theorem "Sound"}
	Suppose that $ \ell >3 $ is a prime, then for large enough $ X $, we have 
	\[
	\#\{0 < D < X\ |\ \ell |h(D)\} \gg X^{1/2}.
	\]
\end{theorem*}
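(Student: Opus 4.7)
The plan is to explicitly construct a family of imaginary quadratic discriminants $D<X$ for which the class group of $\Q(\sqrt{-D})$ contains an element of order exactly $\ell$, along the lines of the classical Ankeny--Chowla / Nagell argument. The guiding idea is to manufacture an ideal whose $\ell$-th power is visibly principal, while ruling out---for most members of the family---the possibility that the ideal is itself principal.

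First, I would fix $Y\asymp (X/4)^{1/\ell}$ and parametrize the family by pairs $(a,t)$ of positive integers with $1\le t\le Y$ and $1\le a<2t^{\ell/2}$, subject to mild parity and coprimality conditions (for instance $\gcd(a,2t)=1$ together with a congruence mod $4$ ensuring that the squarefree part is $\equiv 3\pmod 4$). Setting $D_{0}:=4t^{\ell}-a^{2}$ and letting $D$ denote the squarefree part of $D_{0}$, the identity $a^{2}+D_{0}=4t^{\ell}$ translates, after passing to the ring of integers $\mathcal{O}$ of $\Q(\sqrt{-D})$, into an ideal equation $\mathfrak{a}^{\ell}=(t)$, where $\mathfrak{a}$ is generated by $t$ and $(a+\sqrt{-D})/2$. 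In particular, the class $[\mathfrak{a}]$ has order dividing $\ell$.

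The crux is to argue that for most $(a,t)$ in the family, the order of $[\mathfrak{a}]$ is exactly $\ell$. If $[\mathfrak{a}]^{k}$ were trivial for some $1\le k<\ell$, we would obtain $\alpha\in\mathcal{O}$ with $N(\alpha)=t^{k}$. Writing $\alpha=(u+v\sqrt{-D})/2$ yields $u^{2}+Dv^{2}=4t^{k}$. For $v\neq 0$ this forces $D\le 4t^{k}\le 4t^{\ell-1}$, which contradicts $D\gtrsim t^{\ell-\epsilon}$ for generic members of the family (here one uses that the squarefree part $D$ of $D_{0}\asymp t^{\ell}$ loses at most $t^{\epsilon}$ in size, on average). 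The boundary case $v=0$ forces $t^{k}$ to be a perfect square and is excluded by restricting $t$ to a subfamily avoiding these configurations. A routine sieve bookkeeping confirms that the exceptional pairs form a vanishing proportion of the family.

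Finally, I would count the distinct squarefree $D<X$ that appear. The admissible pairs number at least $\gg\sum_{t\le Y}t^{\ell/2}\gg Y^{1+\ell/2}$. A fixed $D$ arising from the construction forces $a^{2}\equiv -D\pmod{t^{\ell}}$ up to common factors, and a standard divisor estimate bounds the number of pairs producing a given $D$ by $D^{\epsilon}$. Choosing $Y\asymp X^{1/\ell}$ yields $\gg X^{1/2+1/\ell-\epsilon}\gg X^{1/2}$ distinct discriminants $D<X$ with $\ell\mid h(D)$. The main obstacle is the non-principality step: obtaining a sufficiently sharp lower bound on $D$ relative to $D_{0}$---that is, controlling how much the squarefree part can shrink compared to $D_{0}$---is delicate, and any slack there can degrade the final exponent below $1/2$. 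The remaining counting and multiplicity estimates are, by comparison, fairly routine.
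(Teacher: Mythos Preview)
The paper does not contain a proof of this statement. Soundararajan's theorem is quoted in the introduction purely as a background result, with a citation to \cite{Sound - divisibility}, and the authors make no attempt to reprove it; the paper's original contributions are Theorems \ref{Theorem "Class Number main"} and \ref{Theorem "Main Result prime 1"} and their corollaries. So there is no ``paper's own proof'' against which to compare your proposal.

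That said, your sketch is broadly in the spirit of Soundararajan's actual argument: one manufactures discriminants of the shape $4t^{\ell}-a^{2}$, produces an ideal class of order dividing $\ell$, and then controls multiplicities and the loss from passing to squarefree parts. You have correctly identified the genuine difficulty --- bounding how small the squarefree kernel of $4t^{\ell}-a^{2}$ can be, and bounding how many pairs $(a,t)$ can yield the same $D$ --- and your honest caveat at the end is apt: the phrases ``loses at most $t^{\epsilon}$ in size, on average'' and ``a standard divisor estimate bounds the number of pairs \ldots\ by $D^{\epsilon}$'' are exactly where the real work lies, and neither follows from a routine estimate. Soundararajan's contribution was precisely a careful treatment of these two points (via a sieve for the squarefree condition and a clever multiplicity argument), which is what pushes the exponent up to $\tfrac{1}{2}+\tfrac{1}{\ell}$ rather than merely $\tfrac{1}{2}$. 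Your outline would need those ingredients filled in before it constitutes a proof, but as a plan it is pointed in the right direction.
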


Now suppose that $ m\equiv 3\mod 4 $ is a prime and that $ n $ is a primitive root modulo $ m $. This means that $ n $ generates $ (\Z/m\Z)^\times $. Then the class number $ h(m) $ of $\Q(\sqrt{-m})$ has been connected to the base $ n $ expansion of $ 1/m $, \cite{Girstmair 1}, \cite{Girstmair 2}, \cite{Girstmair 3}.
\begin{theorem}[Girstmair]\label{Theorem "Girstmair"}
	If $ m\equiv 3\mod 4 $ is a prime and $ 1 < n < m $ is a primitive root modulo $ m $, then 
	\[
	\sum_{k=1}^{m-1} (-1)^k a_k(m) = (n+1)h(m),
	\]
	where $ a_k(m) $ are the digits of the base $ n $ expansion of $ 1/m $.
\end{theorem}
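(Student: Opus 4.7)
The plan is to reduce the alternating digit sum to a quadratic character sum via a telescoping manipulation, and then to close the argument with the classical Dirichlet class number formula for $\mathbb{Q}(\sqrt{-m})$. Writing $r_k$ for the residue of $n^k$ in $\{1,\ldots,m-1\}$ (so that $r_0=1$), the greedy algorithm producing the base $n$ digits of $1/m$ gives the basic identity
\[
m\,a_k \;=\; n\,r_{k-1} - r_k \qquad (1 \le k \le m-1).
\]
Multiplying the target sum by $m$ and splitting yields
\[
m\sum_{k=1}^{m-1}(-1)^k a_k \;=\; n\sum_{k=1}^{m-1}(-1)^k r_{k-1} \;-\; \sum_{k=1}^{m-1}(-1)^k r_k.
\]

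I would then reindex the first inner sum by $j = k-1$ and use the fact that the period length $m-1$ is even together with $r_{m-1} = r_0 = 1$ (since $n$ has order $m-1$ modulo $m$). A short, purely mechanical computation identifies the two inner sums with $-T$ and $T$ respectively, where $T := \sum_{k=1}^{m-1}(-1)^k r_k$, collapsing the previous display to
\[
m\sum_{k=1}^{m-1}(-1)^k a_k \;=\; -(n+1)\,T.
\]
Because $n$ is a primitive root and $m-1$ is even, $n$ is itself a quadratic non-residue modulo $m$, so even powers $n^k$ exhaust the quadratic residues in $(\mathbb{Z}/m\mathbb{Z})^{\times}$ while odd powers exhaust the non-residues. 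Hence
\[
T \;=\; \sum_{a\,\mathrm{QR}} a \;-\; \sum_{b\,\mathrm{QNR}} b \;=\; \sum_{a=1}^{m-1}\left(\tfrac{a}{m}\right) a.
\]

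The final step is to invoke Dirichlet's analytic class number formula. For $m \equiv 3 \pmod 4$ prime with $m > 3$, the Kronecker symbol $\chi_{-m} = \left(\tfrac{\cdot}{m}\right)$ is an odd primitive character of conductor $m$, and combining $h(-m) = \tfrac{w\sqrt{m}}{2\pi}L(1,\chi_{-m})$ (with $w=2$) with the standard closed form for $L(1,\chi_{-m})$ produces
\[
h(m) \;=\; \frac{1}{m}\!\left(\sum_{b\,\mathrm{QNR}} b - \sum_{a\,\mathrm{QR}} a\right) \;=\; -\frac{T}{m}.
\]
Substituting this back gives $\sum_{k=1}^{m-1}(-1)^k a_k = (n+1)h(m)$, as desired; the case $m=3$ must be checked separately because the unit correction $w=6$ breaks the clean closed form. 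The main obstacle is really the bookkeeping for the telescoping step in the first paragraph: it is there that the primitive-root hypothesis (which controls the cycle structure of $(r_k)$) and the parity of $m-1$ (which controls the boundary terms of the reindex) conspire to produce the $(n+1)$ prefactor. Everything else is a clean appeal to classical machinery, which I would cite rather than reprove.
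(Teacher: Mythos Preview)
The paper does not give its own proof of this theorem: it is quoted as a known result of Girstmair with references \cite{Girstmair 1}, \cite{Girstmair 2}, \cite{Girstmair 3}, and is then used as a black box in \S\ref{Section "Distribution of prime moduli"}. So there is nothing in the paper to compare your argument against.

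That said, your proposal is correct and is essentially the classical argument. The identity $m\,a_k = n\,r_{k-1}-r_k$ is exactly the paper's Lemma~\ref{Lemma 3}; the telescoping with the boundary contribution $r_0=r_{m-1}=1$ (which cancels because $m-1$ is even) is right and produces $-(n+1)T$; the identification of $T$ with $\sum_a \bigl(\tfrac{a}{m}\bigr)a$ via the parity of the exponent is the same observation the paper records just after \eqref{Equaiton "Congruence 1"} (``$[n^k]_m$ is a quadratic residue modulo $m$ if and only if $k$ is even''); and the final step is Dirichlet's formula $h(m)=-B_1(\chi)$, which the paper itself invokes at \eqref{Equation "Class Number - Bernoulli"}. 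Your caveat about $m=3$ is apt: for $m=3$, $n=2$ one has $\sum_k(-1)^k a_k=1$ while $(n+1)h(m)=3$, so the stated identity fails there because of the unit factor $w=6$; Girstmair's theorem is customarily stated for $m>3$.
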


\begin{theorem}[Girstmair]\label{Theorem "Ram-Thanga"}
	If $ m\equiv 3\mod 4 $ is a prime and $ n $ be co prime to $ m $ with order $ (m-1)/2 $ in $ (\Z/m\Z)^\times $. Then,
	\[
	\left(\frac{n-1}{2}\right) \left(\frac{m-1}{2} - h(m)\right) = \sum_{k=1}^{(m-1)/2} a_k(m).
	\]
\end{theorem}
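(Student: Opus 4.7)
The plan is to compute $\sum_{k} a_k(m)$ by a short telescoping argument using the digit recurrence, and then to re-express the result via Dirichlet's analytic class number formula for $\Q(\sqrt{-m})$. Since $\gcd(n,m)=1$, the base-$n$ expansion of $1/m$ is purely periodic with period equal to the multiplicative order of $n$ in $(\Z/m\Z)^\times$, which by hypothesis is $(m-1)/2$.

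First, I would set up the digit recurrence: let $r_0 = 1$ and, for $k \geq 1$, let $r_k \in \{1,2,\ldots,m-1\}$ be the least positive residue of $n^k \bmod m$. The standard digit extraction algorithm then gives $a_k(m) = (n r_{k-1} - r_k)/m$. Summing this identity over one full period and using the periodicity $r_{(m-1)/2} = r_0$ to telescope, I obtain
\[
\sum_{k=1}^{(m-1)/2} a_k(m) \;=\; \frac{n-1}{m} \sum_{r \in \langle n \rangle} r,
\]
where $\langle n \rangle$ denotes the cyclic subgroup of $(\Z/m\Z)^\times$ generated by $n$, represented by integers in $\{1,\ldots,m-1\}$.

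The next step is a short group-theoretic identification: since $(\Z/m\Z)^\times$ is cyclic of order $m-1$, its unique index-two subgroup is exactly the group of quadratic residues modulo $m$; as $\langle n \rangle$ has index $2$ by hypothesis, it must coincide with this subgroup. Writing $R$ for the sum of the quadratic residues lying in $\{1,\ldots,m-1\}$, the display above becomes $\sum_{k=1}^{(m-1)/2} a_k(m) = (n-1)R/m$.

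To finish, I would invoke Dirichlet's analytic class number formula: for a prime $m \equiv 3 \pmod 4$ with $m > 3$, one has $N - R = m\, h(m)$, where $N$ is the corresponding sum over non-residues. Combined with the trivial identity $R + N = m(m-1)/2$, this yields $R = (m/2)\bigl((m-1)/2 - h(m)\bigr)$, and substituting into the previous expression gives precisely the stated identity. The only substantive external ingredient is Dirichlet's formula; the digit recurrence, the telescoping, and the identification $\langle n \rangle = \mathrm{QR}$ are routine, but together they are what allow an analytic identity about $L(1,\chi)$ to be re-expressed as an assertion about half a period of the base-$n$ digits of $1/m$. The main pitfall to watch for is the edge case $m=3$, where Dirichlet's formula in the form used above picks up an extra factor from the additional units in $\Q(\sqrt{-3})$; this case must be excluded or handled separately.
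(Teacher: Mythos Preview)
The paper does not actually prove this statement; it is quoted in the introduction as a known theorem of Girstmair (with references to \cite{Girstmair 1}, \cite{Girstmair 2}, \cite{Girstmair 3} and \cite{Ram-Tha}) and is thereafter used without proof. So there is no ``paper's own proof'' to compare against.

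That said, your argument is correct and is essentially the standard one. The digit recurrence you write, $a_k(m)=(n r_{k-1}-r_k)/m$, is exactly the paper's Lemma~\ref{Lemma 3}; the telescoping over one period gives $\sum_{k=1}^{(m-1)/2} a_k(m)=\frac{n-1}{m}\sum_{r\in\langle n\rangle} r$; the identification of $\langle n\rangle$ with the quadratic residues is forced since a cyclic group has a unique subgroup of each admissible order; and Dirichlet's formula in the form $N-R=mh(m)$ for $m\equiv 3\pmod 4$, $m>3$, combined with $R+N=m(m-1)/2$, yields $R=\tfrac{m}{2}\bigl(\tfrac{m-1}{2}-h(m)\bigr)$ and hence the claim. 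Your remark that $m=3$ must be excluded (or treated separately) because of the extra units in $\Q(\sqrt{-3})$ is also correct; in that degenerate case $(m-1)/2=1$ and $n=1$ anyway, so both sides vanish trivially.
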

The study was furthered by Murty and Thangadurai \cite{Ram-Tha}.

The first purpose of this paper is to prove the following theorem which is an analogue of Theorems \ref{Theorem "Girstmair"}, \ref{Theorem "Ram-Thanga"} for composite discriminants. The relation that we arrive at is slightly complicated but is in the spirit of providing an expression for the class number of quadratic fields in terms of digits in some base $ n $. Define for every $ 0\leq a\leq n-1 $
\begin{align}
	\sigma(a,1) &:= \#\{1\leq k\leq (m-1)\ |\ ma_k(m)\equiv a\mod n\mbox{ and } 2|k\},\label{Equation "sigma 1"}\\
	\sigma(a,-1) &:= \#\{1\leq k\leq (m-1)\ |\ ma_k(m)\equiv a\mod n\mbox{ and } 2\nmid k\},\label{Equation "sigma 2"}
\end{align}
where $ a_k(m) $'s are the digits of the base $ n $ expansion of $ 1/m $. With these definitions we state our first main theorem.
\begin{theorem}\label{Theorem "Class Number main"}
	Let $ m\equiv 1\mod 4 $ be a prime and $ n\equiv 3\mod 4 $ be a square free integer which is a primitive root modulo $ m $. Observe that in particular $ n $ is coprime to $ m $. Then
	\[
	h(nm) = \frac{-1}{n} \left(\sum_{k=1}^{n-1} k \sum_{a=1}^{n} \chi_1\left(a + k m\right)\left(\sigma(-a,1) - \sigma(-a,-1)\right)\right),
	\]
	where $ h(nm) $ is the class number of $ \Q(\sqrt{-nm}) $ and $ \chi_1 $ is the Legendre symbol modulo $ n $.
\end{theorem}

In particular suppose that $ n=3 $. Since we assume that $ 3 $ is a primitive root modulo $ m $ it follows that $ 3 $ is a quadratic non-residue modulo $ m $ and therefore by quadratic reciprocity, we see that $ m\equiv 2\mod 3 $. In particular if we suppose that $ m=3k+2 $ for some integer $ k $ we have the following corollary.
\begin{corollary}\label{Corollary "Class Number corollary"}
	Suppose $ n=3 $ with the rest of the notations as in Theorem \ref{Theorem "Class Number main"}. Then 
	\begin{equation}\label{Equation "Corollary 1"}
		h(3m) = 2k - 4\sigma(0,1).
	\end{equation}
\end{corollary}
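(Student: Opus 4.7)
The plan is to specialize Theorem~\ref{Theorem "Class Number main"} to $n = 3$, use the classical digit-reversal symmetry to reduce the six unknowns $\sigma(a, \pm 1)$ to a single invariant $\sigma(0, 1) - \sigma(0, -1)$, and then identify $\sigma(0, 1) + \sigma(0, -1)$ as the number of zero base-$3$ digits of $1/m$, which turns out to equal $k$. For the specialization I would use $m \equiv 2 \pmod 3$ (already recorded in the excerpt), so $[m]_3 = 2$; combined with the fact that the Legendre symbol $\chi_1$ modulo $3$ takes values in $\{0, \pm 1\}$, the double sum in Theorem~\ref{Theorem "Class Number main"} reduces to at most six explicit terms indexed by $(j, a) \in \{1, 2\} \times \{1, 2, 3\}$, each with a coefficient that can be tabulated directly.

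The main structural input in the next step is the identity
\[
a_{k + (m-1)/2}(m) = n - 1 - a_k(m),
\]
valid whenever $n$ is a primitive root modulo $m$, being a consequence of $n^{(m-1)/2} \equiv -1 \pmod m$. Because $m \equiv 1 \pmod 4$, the shift $(m-1)/2$ is even, so this involution preserves the parity of the index $k$. A short calculation modulo $3$ (using $m \equiv 2 \pmod 3$) shows it swaps the residue classes $\{m a_k \equiv 0\}$ and $\{m a_k \equiv 1\}$ while fixing $\{m a_k \equiv 2\}$, hence $\sigma(0, \epsilon) = \sigma(1, \epsilon)$ for $\epsilon = \pm 1$. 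Combining with the trivial identity $\sigma(0, \epsilon) + \sigma(1, \epsilon) + \sigma(2, \epsilon) = (m-1)/2$ lets me express $\sigma(2, \epsilon) = (m-1)/2 - 2\sigma(0, \epsilon)$, and substituting back into the six-term formula should collapse the result to a constant multiple of $\sigma(0, 1) - \sigma(0, -1)$.

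To finish, I would evaluate $\sigma(0, 1) + \sigma(0, -1)$, which counts all $k \in \{1, \ldots, m-1\}$ with $a_k = 0$ (since $\gcd(m, 3) = 1$, the congruence $m a_k \equiv 0 \pmod 3$ reduces to $a_k = 0$). Writing $r_j = 3^j \bmod m$, the digit $a_{j+1}$ vanishes iff $3 r_j < m$, i.e., iff $r_j \in \{1, \ldots, k\}$ (since $m = 3k + 2$); because $3$ is a primitive root, the $r_j$ traverse every nonzero residue modulo $m$ exactly once, so exactly $k$ indices qualify. Writing $\sigma(0, -1) = k - \sigma(0, 1)$ and plugging into the collapsed formula should deliver the desired identity $h(3m) = 2k - 4\sigma(0, 1)$.

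The step most prone to error will be the arithmetic bookkeeping: one must track the outer factor $-2/n$, the six character values $\chi_1((a + 2j)/3)$, and the coefficients arising from the $\sigma$-substitutions closely enough that the final coefficient of $\sigma(0, 1)$ lands on exactly $-4$. A sanity check against the smallest admissible case $m = 5$ (where $k = 1$, $h(15) = 2$, and hence $\sigma(0, 1)$ must equal $0$) is worthwhile before finalizing.
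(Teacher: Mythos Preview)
Your proposal is correct and is essentially the paper's own argument, viewed through the digit description of $\sigma$ rather than the quadratic-residue description in \S\ref{Section "Bernoulli"}. Under the identification noted after \eqref{Equation "Sk alpha definition"}, your digit-reversal symmetry $a_{k+(m-1)/2}=2-a_k$ (Lemma~\ref{Lemma 5}, using that $(m-1)/2$ is even since $m\equiv 1\pmod 4$) is exactly the involution $l\mapsto m-l$ on residues that the paper invokes to obtain $\sigma(0,\alpha)=\sigma(2,\alpha)$; your relation $\sigma(0,\epsilon)=\sigma(1,\epsilon)$ is the same identity after the sign change $a\mapsto -a$ built into the two definitions. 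Likewise, your count $\sigma(0,1)+\sigma(0,-1)=k$ via ``$a_j=0\iff [3^{j-1}]_m\le k$'' is the digit-side version of the paper's constraint $x_1+x_2=\lfloor (m-1)/3\rfloor$ coming from counting multiples of $3$ below $m$. The only cosmetic difference is that the paper packages the remaining bookkeeping as a rank-$3$ linear system in $x_1,\dots,x_4$, while you substitute directly; both routes collapse to $h(3m)=2k-4\sigma(0,1)$, and your $m=5$ check is consistent.
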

In Table \ref{Table 1} below, we illustrate Corollary \ref{Corollary "Class Number corollary"} by computing a few examples for the first few values of $ m $. The corresponding class numbers are obtained from the LMFDB database\footnote{\url{https://www.lmfdb.org/NumberField/}}.

\begin{table}[h!]
	\centering
	\caption{Some examples}
	\label{Table 1}
	\begin{tabular}{ |c|c|c|c|c| } 
		\hline
		$ m $ & $ k $ & $ \sigma(0,1) $ & $ 2k - 4\sigma(0,1) $ & $ h(3m) $\\
		\hline
		 5 & 1 & 0 & 2 & 2 \\ 
		 17 & 5  & 2 & 2  & 2\\
		 29 & 9  & 3 & 6  & 6\\
		 101 & 33  & 14 & 10  & 10\\
		 113 & 37  & 17 & 6  & 6\\
		 197 & 65  & 27 & 22  & 22\\
		 269 & 89  & 41 & 14  & 14\\
		 317 & 105  & 46 & 26  & 26\\
		 557 & 185  & 83 & 38  & 38\\
		 653 & 217  & 98 & 42  & 42\\
		\hline
	\end{tabular}
\end{table}

Using Scholz's reflection theorem (see Theorem 10.10 of \cite{Larry W- Cyclotomic fields}) we have the following corollary.
\begin{corollary}\label{Corollary "Class Number corollary 2"}
	Suppose we denote by $ h_>(m) $ the class number of $ \Q(\sqrt{m}) $, if $ k\not\equiv 2\sigma(0,1) \mod 3 $, then $ 3\nmid h_>(m) $.
\end{corollary}
As we shall see below, we have the following equivalent definition for $ \sigma(0,1) $,
\begin{equation}\label{Equation "Sigma alternate"}
	\sigma(0,1) = \#\left\{1\leq l\leq m\ |\ 3|l\mbox{ and } \chi_2(l)=1\right\},
\end{equation}
where $ \chi_2 $ is the Legendre symbol modulo $ m $. Furthermore, we can also deduce the growth of $ \sigma(0,1) $ from \eqref{Equation "Class number Growth"} and \eqref{Equation "Corollary 1"}. Intuitively from \eqref{Equation "Sigma alternate"} we feel that $ \sigma(0,1) $ should grow like $ m/6 $. Indeed this is the case. We state this result as an easily proved corollary.

\begin{corollary}\label{Corollary "Class Number corollary 3"}
	With notations as above, as $ m\to\infty $ we have
	\[
	\sigma(0,1) = \frac{m}{6} + O_\epsilon(m^{1/2+\epsilon}).
	\]
\end{corollary}
We can interpret Corollary \ref{Corollary "Class Number corollary 3"} as the independence of the events ``$ l $ is divisible by $ 3 $" and ``$ l $ is a quadratic residue modulo $ m $" (see also \S \ref{Section "Concluding Remarks"}). We can deduce corresponding corollaries for $ \sigma(0,-1) $ also.

A striking difference between Theorems \ref{Theorem "Girstmair"}, Theorem \ref{Theorem "Ram-Thanga"} and Theorem \ref{Theorem "Class Number main"} warrants a comment. The sums considered in Theorems \ref{Theorem "Girstmair"} and \ref{Theorem "Ram-Thanga"} involve the digits $ a_k(m) $ themselves, whereas in Theorem \ref{Theorem "Class Number main"} involves only the number of digits of certain type. Nevertheless in the investigations leading up to the proof of Theorem \ref{Theorem "Class Number main"}, we are naturally led to consider certain other sums involving the digits of $ 1/m $ modulo some base $ n $. 

Suppose we let $ \mathcal{O}_m(n) $ denote the order of $ n $ in the subgroup $ (\Z/m\Z)^\times $. We consider sums of the form 
\begin{equation}\label{Equation "Definition S"}
	S := \sum_{k=1}^{\mathcal{O}_m(n)} a_k(m) n^{\mathcal{O}_m(n)-k}k,
\end{equation}
which can be thought of as weighted versions of the sums considered in Theorems \ref{Theorem "Girstmair"} and \ref{Theorem "Ram-Thanga"}. We will be interested in considering $ S $ when $ m \equiv 3 \mod 4 $ is a prime and $ n $ is a primitive root modulo $ m $ even though we have defined $ S $ in slighter generality. The study of these sums is interesting in their own right and have led to the second main result of this paper. 

\begin{theorem}\label{Theorem "Congruence for S"}
	Suppose that $ m\equiv 3\mod 4 $ is a prime, $ S $ is as above and $ p $ is an odd prime. The following results hold.
	\begin{enumerate}
		\item	If $ n $ is a primitive root modulo $ m $ and $ p\|(n+1) $, then 
		\begin{equation}\label{Equation "mS congruence 1"}
			mS \equiv 1-m-mh(m) \mod p.
		\end{equation}
		\item	If $ n $ is a primitive root modulo $ m $ and $ p|(n-1) $, then
		\begin{equation}\label{Equation "mS congruence 2"}
			mS \equiv 1 - m + mq \mod p,
		\end{equation}
	\end{enumerate}
	where we have set $ q:=(m-1)/2 $ and $ h(m)$ as usual is the class number of $ \Q(\sqrt{-m}) $.
\end{theorem}

An interesting observation is that in \eqref{Equation "mS congruence 1"} and \eqref{Equation "mS congruence 2"}, every term except $ S $ is independent of choice of $ n $. This tells us that modulo a fixed prime $ p $ say, the sum $ S $ independent of choice of $ n $ which satisfy the hypotheses of the theorem. For example if $ l $ is an odd number coprime to $ p $ such that $ k\mapsto k^l $ is an automorphism of $ (\Z/m\Z)^\times $ and if $ n^l < m $, then the theorem can be applied for $ n^l $ in place of $ n $. This study may lead to insights into additional structure of $ S $.

Even though the two main results seem to be quite closely related to one another, the proofs are rather independent of each other. Therefore we shall prove Theorem \ref{Theorem "Congruence for S"} before proving Theorem \ref{Theorem "Class Number main"} and the corollaries. In \S \ref{Section "Preliminary Results"} we collect some standard results regarding the digits $ a_k(m) $ and fix the notations along the way. We shall turn to the proof of Theorem \ref{Theorem "Congruence for S"} in \S \ref{Section "Distribution of prime moduli"} and the proof of Theorem \ref{Theorem "Class Number main"} in \S \ref{Section "Class number pq"}. Finally in \S \ref{Section "Concluding Remarks"} we discuss some interesting questions and hint at further directions which can be taken.

We conclude the introduction with the following result. The function field analogue of the digit sums considered above is rather easily solved. In a personal communication, Prof. Z\'eev Rudnick\footnote{The Cissie and Aaron Beare Chair in Number Theory, Department of Mathematics, Tel-Aviv University.} has informed the authors of the following result. 

Let $ q $ be a prime power and $ \F_q[x], \F_q(x) $ denote the polynomial ring and the field of rational functions in one variable with coefficients in $ \F_q $ respectively. Fix $ B(x)\in \F_q[x] $ of positive degree. Then any $ f\in \F_q (x) $ has a base $ B $ expansion as follows,
\[
f(x) = \sum_{j=-\infty}^{J} a_{-j}(x) B^j(x),
\]
where each $ a_i(x) \in \F_q[x]$ with degree strictly less than that of $ B(x) $. In particular if we take $ P\in\F_q[x] $ to be an irreducible polynomial coprime to $ B $
and consider $ f=1/P $, then as in the case of integers, it follows that the base $ B $ expansion of $ 1/P $ is periodic with period say $ T $. In this setup, in analogy with the integers we can define the following sum,
\[
S_B(P)(x) = a_1(x) + a_2(x) + \ldots + a_T(x) \in \F_q[x].
\]
Now we have the following theorem.

\begin{theorem}[Rudnick]\label{Theorem "Rudnick"}
	If $ P $ is coprime to $ B(B-1) $ then $ S_B(P)(x) = 0$.
\end{theorem}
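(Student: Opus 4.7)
\vspace{2mm}
\noindent\textbf{Proof proposal.} The plan is to exploit the fact that the base $B$ expansion of $1/P$ is governed by a single polynomial identity, and then to reduce that identity modulo $B-1$. Because the hypothesis $\gcd(P,B(B-1))=1$ gives us both that the expansion makes sense at all (via $\gcd(P,B)=1$) and that it becomes tractable modulo $B-1$ (via $\gcd(P,B-1)=1$), the conclusion will fall out almost immediately once the bookkeeping is in place.

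First I would set $T$ to be the multiplicative order of $B$ in $(\F_q[x]/P)^{\times}$, which is well-defined since $\gcd(B,P)=1$, and note that this $T$ is precisely the period of the expansion in the statement. Writing $B^T-1=PQ$ with $Q\in\F_q[x]$, I would observe that $\deg Q<T\deg B$, so $Q$ admits a unique base-$B$ representation
\[
Q \;=\; a_1 B^{T-1}+a_2 B^{T-2}+\cdots+a_T,
\]
with each $a_i\in\F_q[x]$ of degree $<\deg B$. A direct geometric-series manipulation
\[
\frac{1}{P}=\frac{Q}{B^T-1}=\sum_{k\geq 1} Q\,B^{-kT}
\]
then identifies these $a_i$ as exactly the digits of the first period of the base-$B$ expansion of $1/P$, so that $S_B(P)=a_1+a_2+\cdots+a_T$.

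Next I would reduce the identity $PQ=B^T-1$ modulo $B-1$. Since $B\equiv 1\pmod{B-1}$, the right-hand side vanishes, and because $\gcd(P,B-1)=1$ by hypothesis, we get $Q\equiv 0\pmod{B-1}$. On the other hand, using $B\equiv 1\pmod{B-1}$ in the expansion of $Q$ gives
\[
Q \;\equiv\; a_1+a_2+\cdots+a_T \;=\; S_B(P) \pmod{B-1}.
\]
Combining, $S_B(P)\equiv 0\pmod{B-1}$.

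Finally, since each $a_i$ has degree strictly less than $\deg B=\deg(B-1)$, the same is true of the sum $S_B(P)$. A polynomial of degree less than $\deg(B-1)$ that is divisible by $B-1$ must be identically zero, giving $S_B(P)=0$ as required. I do not anticipate a serious obstacle here: the whole argument is essentially the function-field analogue of the classical trick of summing a repeating decimal by recognizing it as $Q/(B^T-1)$, and the only point that requires real care is verifying that the digits $a_i$ extracted from $Q$ are literally the digits of the expansion, so that $S_B(P)$ can be read off as $Q$ evaluated at $B=1$ in the appropriate quotient ring.
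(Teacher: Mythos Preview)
Your proof is correct, and since the paper explicitly omits the proof (calling it ``rather straightforward''), there is nothing to compare against beyond noting that your argument is precisely the kind of direct verification the authors evidently had in mind: write $B^{T}-1=PQ$, read off the digits of $1/P$ from the base-$B$ representation of $Q$, and reduce modulo $B-1$ to force $S_B(P)=0$ by the degree bound. The only step deserving a moment's pause---that the $a_i$ you extract from $Q$ really are the first period of digits of $1/P$---is handled correctly by your geometric-series computation, and the final degree argument (each $a_i$ has degree $<\deg B=\deg(B-1)$, hence so does their sum) is sound because $B$ has positive degree.
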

The proof of the above theorem is rather straightforward and hence is omitted. However, Theorem \ref{Theorem "Rudnick"} highlights that the behaviour of the digits described in Theorems \ref{Theorem "Girstmair"} and \ref{Theorem "Ram-Thanga"} are somewhat unique to the integer setup. It stands to reason that the underlying role played by arithmetic is much deeper than what one might initially perceive.

\section*{Acknowledgements}

The second named author would like to thank Sreejith M.M. and Mohit Mishra for useful conversations and the Kerala School of Mathematics for its generous hospitality. The authors also would like to thank Zeev Rudnick for sharing with us the proof of Theorem \ref{Theorem "Rudnick"} and for his other valuable comments on the manuscript. The authors would also like to thank the referee for their valuable comments.

\section{Preliminary Results}\label{Section "Preliminary Results"}

\subsection*{Notation}

Consider an integer $ n\geq 2 $ which shall remain fixed for the rest of the section. Suppose that $ m $ is another natural number such that $ (n,m)=1 $. Consider the base $ n $ representation of $ 1/m $,
\begin{equation}\label{Equation "Base n representation 1/m"}
	\frac{1}{m} = \sum_{k=1}^{\infty} \frac{a_k(m)}{n^k}.
\end{equation}
The coefficients (which we shall refer to as digits from now on) $ a_k(m) $ each satisfy $ 0\leq a_k(m)<n $ and are unique given $ m $. Furthermore they are periodic in $ k $ and the period equals the order of $ n $ in $ (\Z/m\Z)^\times $. 

Suppose that $ 0\leq a_1,\ldots,a_l < n $ are integers. The ordered tuple $ (a_1,\ldots,a_l) $ is then called a \textit{string of length} $ l $ modulo $ n $. Given a string $ (a_1,\ldots,a_l) $ modulo $ n $, define
\begin{align*}
	\overline{(a_1,\ldots,a_l)} & := \sum_{k=1}^{\infty} \frac{a_k}{n^k},
\end{align*}
where, for $ k >l $ we take $ a_k := a_{k-l} $. We shall denote the order of $ n $ in $ (\Z/m\Z)^\times $ as $ \mathcal{O}_m(n) $. Furthermore, the class number of $ \Q(\sqrt{-m}) $ will be denoted by $ h(m) $. We shall assume that $ m\equiv 3\mod 4 $ unless otherwise mentioned and set $ q=(m-1)/2 $.

%

%

\begin{lemma}\label{Lemma 2}
	Suppose that $ 1/m = \overline{(a_1,\ldots,a_l)} $. If $ a\equiv n^t \mod m $ for some $ t\geq 0 $, then, 
	\[
	\left\{\frac{a}{m}\right\} = \overline{(a_{t+1},\ldots, a_l,a_1,\ldots, a_t)}.
	\]
\end{lemma}

\begin{proof}
	The proof follows from the observation that
	\[
	\left\{\frac{a}{m}\right\} = \left\{\frac{n^t}{m}\right\}.
	\]
\end{proof}

%
%

\begin{lemma}
	For any $ k $, we have 
	\[
	a_k(m) \equiv \left\lfloor\frac{n^k}{m}\right\rfloor  \mod n.
	\]
	Furthermore, if $ p $ is a prime such that $ n\equiv 1\mod p $, then for any $ d $,
	\[
	\sum_{k=1}^d a_k(m) \equiv \left\lfloor\frac{n^d}{m}\right\rfloor \mod p
	\]
\end{lemma}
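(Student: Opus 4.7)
The plan is to extract both statements from a single identity obtained by multiplying the base $n$ expansion \eqref{Equation "Base n representation 1/m"} by $n^k$. Starting from
\[
\frac{n^k}{m} \;=\; \sum_{j=1}^\infty \frac{a_j(m)\, n^k}{n^j} \;=\; \sum_{j=1}^{k} a_j(m)\, n^{k-j} \;+\; \sum_{j=k+1}^{\infty} \frac{a_j(m)}{n^{j-k}},
\]
the first sum is a non-negative integer and the second sum lies in $[0,1)$ (it is in fact the base $n$ expansion of $\{n^k/m\}$, as in Lemma \ref{Lemma 2}). Since the decomposition into integer and fractional parts is unique, I would conclude
\[
\left\lfloor\frac{n^k}{m}\right\rfloor \;=\; \sum_{j=1}^{k} a_j(m)\, n^{k-j}.
\]
Strictly speaking one should verify that the tail sum is strictly less than $1$, which follows because the sequence $\{a_j(m)\}$ is not eventually $n-1$ (this would contradict $(n,m)=1$ and $m>1$); this is a standard remark but worth noting.

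From this master identity both claims follow by reducing the sum modulo the appropriate integer. Reducing modulo $n$, every term with $j<k$ contributes a multiple of $n$, leaving
\[
\left\lfloor\frac{n^k}{m}\right\rfloor \;\equiv\; a_k(m) \pmod{n},
\]
which is the first assertion. For the second assertion, taking $k=d$ and reducing modulo a prime $p$ with $n\equiv 1\pmod p$, each power $n^{d-j}\equiv 1\pmod p$, so
\[
\left\lfloor\frac{n^d}{m}\right\rfloor \;=\; \sum_{j=1}^{d} a_j(m)\, n^{d-j} \;\equiv\; \sum_{j=1}^{d} a_j(m) \pmod{p}.
\]

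The argument is essentially a manipulation of the defining series, so there is no real obstacle; the only point requiring care is the justification that $\sum_{j=1}^k a_j(m) n^{k-j}$ is exactly the integer part (and not one more), which is handled by the elementary observation above. Both congruences then follow immediately from the single identity.
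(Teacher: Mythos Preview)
Your argument is correct and is exactly the approach of the paper: the proof there simply records the identity $\left\lfloor n^k/m\right\rfloor=\sum_{j=1}^{k}a_j(m)\,n^{k-j}$ (your ``master identity'') and declares both congruences clear from it. If anything you are slightly more careful than the paper in justifying why the tail sum is strictly less than $1$.
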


\begin{proof}
	The proof is clear since 
	\begin{equation}\label{Equation "ak(m) formula"}
		\left\lfloor\frac{n^d}{m}\right\rfloor = \sum_{k=1}^d a_k(m) n^{d-k}.
	\end{equation}
\end{proof}
Suppose for an integer $ g $ we denote by $ [g]_n $ to be the least \textit{non-negative} integer such that $ [g]_n\equiv g\mod n $.
\begin{lemma}\label{Lemma 3}
	With $ n,m $ as above, we have 
	\[
	a_k(m) = \frac{n[n^{k-1}]_m-[n^k]_m}{m}.
	\]
\end{lemma}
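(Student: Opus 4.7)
The plan is to express the fractional part $\{n^k/m\}$ in two different ways and compare. First I would note that since $[n^k]_m$ denotes the least non-negative residue of $n^k$ modulo $m$, we have the identity
\[
\left\{\frac{n^k}{m}\right\} = \frac{[n^k]_m}{m},
\]
because $n^k - m\lfloor n^k/m\rfloor$ is precisely $[n^k]_m$.

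Next, I would read off the fractional part directly from the base $n$ expansion \eqref{Equation "Base n representation 1/m"}. Multiplying both sides of that expansion by $n^{k-1}$ and taking fractional parts gives
\[
\left\{\frac{n^{k-1}}{m}\right\} = \sum_{j=1}^{\infty} \frac{a_{k-1+j}(m)}{n^{j}} = \frac{a_k(m)}{n} + \frac{a_{k+1}(m)}{n^2} + \cdots
\]
Multiplying through by $n$ peels off the leading digit:
\[
n\left\{\frac{n^{k-1}}{m}\right\} = a_k(m) + \sum_{j=1}^{\infty} \frac{a_{k+j}(m)}{n^{j}} = a_k(m) + \left\{\frac{n^k}{m}\right\}.
\]

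Finally I would substitute the first identity into this equation to obtain
\[
a_k(m) = n\left\{\frac{n^{k-1}}{m}\right\} - \left\{\frac{n^k}{m}\right\} = \frac{n[n^{k-1}]_m - [n^k]_m}{m},
\]
which is exactly the claim. The only point requiring a moment of care is verifying $n\{n^{k-1}/m\} - \{n^k/m\}$ is the integer $a_k(m)$, and that is guaranteed because digits of a base-$n$ expansion lie in $\{0,1,\ldots,n-1\}$ and the series telescopes cleanly. I expect no substantive obstacle here; the lemma is essentially the shift-by-one relation between successive remainders $[n^{k-1}]_m$ and $[n^k]_m$, packaged via the base-$n$ expansion of $1/m$.
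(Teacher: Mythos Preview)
Your argument is correct: the identity $\{n^k/m\}=[n^k]_m/m$ together with the shift relation $n\{n^{k-1}/m\}=a_k(m)+\{n^k/m\}$ read off from the base-$n$ expansion immediately gives the formula. The paper does not actually supply its own proof of this lemma but instead cites \cite{Chak - Krish}, so there is no in-paper argument to compare against; your self-contained derivation is the standard one and would be a welcome inclusion.
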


\begin{proof}
	We have,
	\begin{align*}
		\sum_{k=1}^\infty \frac{n[n^{k-1}]_m-[n^k]_m}{mn^k}&= \frac{1}{m} \sum_{k=1}^\infty\frac{n[n^{k-1}]_m-[n^k]_m}{n^k}\\
		&= \frac{1}{m}\left(	\sum_{k=1}^\infty \frac{[n^{k-1}]_m}{n^{k-1}} - \sum_{k=1}^\infty\frac{[n^k]_m}{n^k}\right)\\
		&=\frac{1}{m}.
	\end{align*}
Now the lemma follows from the uniqueness of $ a_k(m) $s once we observe that $ \frac{n[n^{k-1}]_m-[n^k]_m}{m} $ are integers lying in between $ 0 $ and $ n-1 $.
\end{proof}

As a particular consequence of the previous lemma, we have that
\begin{equation}\label{Equation "Congruence of ak"}
	ma_k(m)\equiv -[n^k]_m \mod n.
\end{equation}

Another consequence of the above lemma is the following lemma.

\begin{lemma}\label{Lemma 5}
	Suppose that $ m $ is a prime and that $ \mathcal{O}_m(n) $ is even for some $ n\not\equiv -1\mod m $. Then for every $ k $ we have 
	\[
	a_k(m) + a_{k+\frac{\mathcal{O}_m(n)}{2}}(m) = n-1.
	\]
\end{lemma}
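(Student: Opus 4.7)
Write $T = \mathcal{O}_m(n)$, so $T$ is even by hypothesis. My first step is to show that $n^{T/2}\equiv -1\pmod m$. Since $T$ is the order of $n$, the element $n^{T/2}$ cannot be $1$ modulo $m$, but $(n^{T/2})^2 \equiv 1 \pmod m$. Because $m$ is prime, $(\Z/m\Z)^\times$ is cyclic and therefore contains a unique element of order $2$, namely $-1$. Hence $n^{T/2}\equiv -1 \pmod m$. (The hypothesis $n\not\equiv -1\pmod m$ rules out the degenerate case $T=2$, where $T/2=1$ might cause trivial issues; but actually the argument below will go through as soon as $n^{T/2}\equiv -1$.)

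The core of the proof is then a direct substitution into Lemma \ref{Lemma 3}. Applying the formula
\[
a_j(m) = \frac{n[n^{j-1}]_m - [n^j]_m}{m}
\]
at $j=k+T/2$ requires evaluating $[n^{k+T/2-1}]_m$ and $[n^{k+T/2}]_m$. From $n^{T/2}\equiv -1\pmod m$ we deduce $n^{k+T/2-1}\equiv -n^{k-1}$ and $n^{k+T/2}\equiv -n^k$ modulo $m$. Since $(n,m)=1$, neither $[n^{k-1}]_m$ nor $[n^k]_m$ is zero, so
\[
[n^{k+T/2-1}]_m = m - [n^{k-1}]_m, \qquad [n^{k+T/2}]_m = m - [n^k]_m.
\]

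Substituting these into Lemma \ref{Lemma 3} gives
\[
a_{k+T/2}(m) = \frac{n\bigl(m-[n^{k-1}]_m\bigr) - \bigl(m-[n^k]_m\bigr)}{m} = (n-1) - \frac{n[n^{k-1}]_m - [n^k]_m}{m} = (n-1) - a_k(m),
\]
which rearranges to the desired identity $a_k(m) + a_{k+T/2}(m) = n-1$.

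The only conceptual step is the first one, identifying $n^{T/2}$ with $-1\pmod m$, and this is where primality of $m$ is essential; everything else is a mechanical application of Lemma \ref{Lemma 3}. So I do not anticipate any serious obstacle — the proof is genuinely a short three-line computation once the parity observation is made.
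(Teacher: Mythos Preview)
Your argument is correct and is exactly the derivation the paper intends: the text introduces Lemma~\ref{Lemma 5} as ``another consequence'' of Lemma~\ref{Lemma 3} (with the full proof deferred to \cite{Chak - Krish}), and your computation---first identifying $n^{T/2}\equiv -1\pmod m$ via the cyclicity of $(\Z/m\Z)^\times$, then substituting $[n^{k+T/2-1}]_m=m-[n^{k-1}]_m$ and $[n^{k+T/2}]_m=m-[n^k]_m$ into the formula of Lemma~\ref{Lemma 3}---is precisely that consequence spelled out. There is nothing to add.
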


\begin{proof}
	From \eqref{Equation "Congruence of ak"} we have
	\[
	m(a_k(m)+a_{k+\frac{\mathcal{O}_m(n)}{2}}(m)) \equiv -[n^k]_m - [n^{k+\frac{\mathcal{O}_m(n)}{2}}]_m\mod n.
	\]
	Since $ n^{k+\frac{\mathcal{O}_m(n)-1}{2}}\equiv -n^k\mod m $, we have 
	\begin{align*}
		m(a_k(m)+a_{k+\frac{\mathcal{O}_m(n)}{2}}(m)) &\equiv -[n^k]_m + [n^k]_m -m \mod n\\
		&\equiv -m \mod n.
	\end{align*}
	Since $ (n,m)=1 $, this in particular gives us that 
	\[
	a_k(m)+a_{k+\frac{\mathcal{O}_m(n)}{2}}(m)\equiv -1\mod n.
	\]
	Now since $ 0\leq a_k(m),a_{k+\frac{\mathcal{O}_m(n)}{2}}(m) \leq n-1 $, the claim follows.
\end{proof}

\begin{remark}
	It would be extremely interesting to find an analogue of Lemma \ref{Lemma 5} when $ \mathcal{O}_m(n) $ is odd.
\end{remark}

\section{Class Numbers of Prime Discriminants}\label{Section "Distribution of prime moduli"}

We shall use Theorems \ref{Theorem "Girstmair"} and \ref{Theorem "Ram-Thanga"} freely in what follows. Suppose that $ n < m $ and that $ p $ is a prime other than $ m $.

\begin{proof}[Proof of Theorem \ref{Theorem "Congruence for S"}]
	\begin{enumerate}
		\item	Observe that following equation holds generally, 
		\begin{equation}\label{Equation "Temp"}
			\sum_{k=1}^{m-1} a_k(m) n^{m-k} = n\left\lfloor \frac{n^{m-1}}{m}\right\rfloor= \frac{n^{m}-n}{m}.
		\end{equation}
		Suppose that $ n=lp-1 $ and set $ q=(m-1)/2 $. Since $ m\equiv 3 \mod 4$ is a prime and $ n $ is a primitive root modulo $ m $, we have
		\begin{align*}
			\frac{n^m-n}{m} &=\sum_{k=1}^{m-1} a_k(m) n^{m-k}\\
			&=\sum_{k=1}^{m-1} a_k(m) (lp-1)^{m-k}\\
			&\equiv \sum_{k=1}^{m-1} (-1)^{m-k-1} a_k(m) (m-k)lp + \sum_{k=1}^{m-1} (-1)^{m-k} a_k(m) \\
			&\equiv mlp\sum_{k=1}^{m-1} (-1)^{m-k-1} a_k(m) + lp\sum_{k=1}^{m-1} (-1)^{m-k} a_k(m)k + \sum_{k=1}^{m-1} (-1)^{m-k} a_k(m)\\
			&\equiv (1-mlp)\sum_{k=1}^{m-1} (-1)^{m-k} a_k(m) + lp\sum_{k=1}^{m-1} (-1)^{m-k} a_k(m)k\\
			&\equiv -lph(m) - lp\sum_{k=1}^{m-1} (-1)^{k} a_k(m)k\\
			&\equiv -lp\left(h(m) + \sum_{k=1}^{m-1} (-1)^{k} a_k(m)k \right)\mod p^2.
		\end{align*}		
		Now consider $ n^{m-1}-1 $. Since $ p \| (n^2-1)$, we have $ p^2 | (n^{m-1}-1) $ if and only if $ p | (n^{m-1}-1)/(n^2-1) $. Suppose that $ m-1=2q $ for some odd $ q $, then we have
		\begin{align*}
			\frac{n^{m-1}-1}{n^2-1} &= \left((n^2)^{q-1} + (n^2)^{q-2} + \ldots + n^2 + 1 \right)\\
			&\equiv q\mod p,
		\end{align*}
		since $ n\equiv -1\mod p $. In particular we have 
		\[
		-2nlpq \equiv -lpm\left(h(m) + S\right) \mod p^2.
		\]
		From assumption, we have $ (l,p)=1 $. This finally gives us
		\begin{equation}\label{Equaiton "Congruence 1"}
			2q \equiv m-1 \equiv -m(h(m) + S) \mod p,
		\end{equation}
	from where the required congruence follows.
	\item	Recall that from our assumptions, $ n\equiv 1\mod p $. Also from \eqref{Equation "Temp"}, 
	\[
	\sum_{k=1}^{m-1} a_k(m)m \equiv 0\mod p.
	\]
	We start with the following expression,
	\begin{align*}
		\sum_{k=1}^{m-1} \left\lfloor \frac{n^k}{m}\right\rfloor &\equiv \sum_{k=1}^{m-1}\sum_{d=1}^{k} a_d(m)\\
		&\equiv \sum_{k=1}^{m-1} a_k(m) (m-k)\\
		&\equiv -S\mod p.
	\end{align*}
On the other hand,
	\begin{align*}
		m\sum_{k=1}^{m-1} \left\lfloor \frac{n^k}{m}\right\rfloor &= \sum_{k=1}^{m-1} n^k - \sum_{k=1}^{m-1} [n^k]_m\\
		&\equiv (m-1) -mq \mod p.
	\end{align*}
The required result follows from the previous two congruences.
	\end{enumerate}
\end{proof}

\section{The Class Numbers of Squarefree Discriminants}\label{Section "Class number pq"}

\subsection{Some General Character sums}\label{Section "Bernoulli"}

For the remaining of this section fix $ D\in \N $ and $ \chi $ an \textit{odd} Dirichlet character modulo $ D $ such that $ \chi $ can be written as a product of Dirichlet characters $ \chi_1,\chi_2 $ modulo integers $ n,m >1 $. Suppose that $ \chi_1 $ is odd and $ \chi_2 $ is even, $ D=nm $ and $ (n,m) = 1 $. In particular we note that,
\begin{align}\label{Equation "Character Vanish"}
	\sum_{a=1}^{n} \chi_1(a) = 0&&\mbox{and}&& \sum_{a=1}^{m} \chi_2(a) a = 0.
\end{align}

The generalized Bernoulli numbers (see Chapter 2 of \cite{Iwa}) are defined as the coefficients of 
\begin{equation}\label{Equation "Generalized Bernoulli numbers definition"}
	F_\chi = \sum_{a=1}^{D} \frac{\chi(a) t e^{at}}{e^{Dt}-1} = : \sum_{n=0}^{\infty} B_n(\chi) \frac{t^n}{n!}.
\end{equation}
In particular (see pg. 14 of \cite{Iwa}), we have
\[
B_1(\chi) = \frac{1}{D} \sum_{a=1}^{D} \chi(a) a,
\]
or equivalently, we have
\begin{equation}\label{Equation "B1 formula"}
	DB_1(\chi) = \sum_{a=1}^{D} \chi(a) a.
\end{equation}
In particular, if $ D < -4 $ is a fundamental discriminant and $ \chi $ is the Legendre symbol modulo $ D $, we have
\begin{equation}\label{Equation "Class Number - Bernoulli"}
	B_1(\chi) = -h(D).
\end{equation}
Let $ I_1, I_2\subset \C $ be the images of $ \chi_1,\chi_2 $. 
Define, for an integer $ k $ and $ \alpha\in I_2 $
\begin{equation}\label{Equation "Sk alpha definition"}
	\beta(a,\alpha) : = \#\{1\leq l\leq m\ |\ l\equiv a\mod n, \chi_2(l)=\alpha\}.
\end{equation}

Now we can state the main theorem of this section.
\begin{theorem}\label{Theorem "General character sums"}
	With $ D, n, m $ as above and $ m $ a prime, we have 
	\[
	B_1(\chi) = \frac{1}{n}\sum_{\alpha\in I_2} \alpha \sum_{k=0}^{n-1} k \sum_{a=1}^{n} \chi_1(a+km)\beta(a,\alpha).
	\]
\end{theorem}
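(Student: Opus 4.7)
The plan is to expand $nm\,B_1(\chi)=\sum_{l=1}^{nm}\chi(l)\,l$ via \eqref{Equation "B1 formula"}, decompose the range $\{1,\dots,nm\}$ by Euclidean division modulo $m$, and then identify the surviving interior sums with $\sigma(a,\alpha)$.

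The first step is the parametrization. Writing each $l\in\{1,\dots,nm\}$ uniquely as $l=b+km$ with $b\in\{1,\dots,m\}$ and $k\in\{0,\dots,n-1\}$, the $m$-periodicity of $\chi_2$ yields $\chi_2(l)=\chi_2(b)$, so
\[
nm\,B_1(\chi)=\sum_{b=1}^{m}\chi_2(b)\sum_{k=0}^{n-1}\chi_1(b+km)(b+km).
\]
Separating the $b$- and $km$-parts, the coefficient of $b$ reads $\sum_{b=1}^{m}\chi_2(b)\,b\sum_{k=0}^{n-1}\chi_1(b+km)$. Since $(m,n)=1$, the inner sum runs over a complete residue system modulo $n$ and therefore vanishes by non-principality of $\chi_1$. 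Only the $km$-part survives, giving
\[
nm\,B_1(\chi)=m\sum_{k=0}^{n-1}k\sum_{b=1}^{m}\chi_2(b)\chi_1(b+km).
\]

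The second step is to collect the $b$-sum by residues modulo $n$. Fixing $k$ and grouping $b$ by $b\equiv a\mod n$, the value $\chi_1(b+km)$ depends on $b$ only through $a$ and equals $\chi_1(a+k[m]_n)$, while the interior sum of $\chi_2(b)$ over $b\equiv a\mod n$, $1\le b\le m$, equals $\sum_{\alpha\in I_2}\alpha\,\sigma(a,\alpha)$ by definition \eqref{Equation "Sk alpha definition"} (equivalently \eqref{Equation "sigma 1"}--\eqref{Equation "sigma 2"}, through \eqref{Equation "Congruence of ak"}). Interchanging the $\alpha$-, $k$- and $a$-summations now places $B_1(\chi)$ essentially in the advertised shape, with the outer $1/n$ coming from the cancellation $\frac{1}{nm}\cdot m$.

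The remaining factor of $2$ is obtained by exploiting the symmetry $\sigma(a,\alpha)=\sigma([m]_n-a,\alpha)$, arising from the involution $b\leftrightarrow m-b$ together with the evenness of $\chi_2$; the oddness of $\chi_1$ then pairs the $k$-weighted inner sums in a way that folds the full range into the stated form. I expect the main difficulty to be essentially bookkeeping --- tracking the shift $[m]_n$ through each rearrangement, reconciling the two conventions $a\in\{0,\dots,n-1\}$ and $a\in\{1,\dots,n\}$ (identical since $\chi_1(n)=0$), and checking that the boundary term $b=m$ (where $\chi_2(m)=0$) does not interfere --- rather than any conceptual obstacle.
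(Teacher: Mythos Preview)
Your first two steps are correct and constitute the entire substance of the paper's argument. The paper organizes the same computation slightly differently: it introduces $\Gamma_\alpha=\sum_{\chi_2(a)=\alpha}\chi_1(a)a$, computes $nm(B_1(\chi)\pm B_1(\chi_1))$ in terms of the $\Gamma_\alpha$, evaluates $\Gamma_0$ separately, and only then performs the Euclidean decomposition $a\mapsto a+km$ on each $\Gamma_\alpha$. Adding the two $\pm$ identities kills the $B_1(\chi_1)$ terms and recovers exactly your formula $nB_1(\chi)=\sum_\alpha\alpha\sum_k k\,c_k(\alpha)$. Your route is the more direct one; the detour through $B_1(\chi_1)$ buys nothing.

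Your third step, however, is a genuine gap. The symmetry you invoke is real: $\sigma(a,\alpha)=\sigma([m]_n-a,\alpha)$ together with $\chi_1(-x)=-\chi_1(x)$ yields $c_k(\alpha)=-c_{\,n-1-k}(\alpha)$, where $c_k(\alpha)=\sum_a\chi_1(a+k[m]_n)\sigma(a,\alpha)$. But feeding this into $\sum_{k=0}^{n-1}k\,c_k(\alpha)$ and reindexing gives
\[
\sum_{k}k\,c_k(\alpha)=-\sum_{k}(n-1-k)\,c_k(\alpha)=\sum_k k\,c_k(\alpha)-(n-1)\sum_k c_k(\alpha),
\]
and the last sum vanishes, so you recover only the tautology $\sum_k k\,c_k=\sum_k k\,c_k$. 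No factor of $2$ can be manufactured this way: a symmetry of a sum cannot double its value. Your computation through step~2 already produces the closed form $B_1(\chi)=\frac{1}{n}\sum_\alpha\alpha\sum_k k\sum_a\chi_1(a+k[m]_n)\sigma(a,\alpha)$, and that is what the paper's own derivation gives as well --- adding \eqref{Equation "Bernoulli intermediate - "} and \eqref{Equation "Bernoulli intermediate + "} yields $2nmB_1(\chi)=2\sum_\alpha\alpha\Gamma_\alpha$, hence $nB_1(\chi)=\sum_\alpha\alpha\Gamma_\alpha'$. The extra $2$ in \eqref{Equation "Bernoulli general"} and in the statement of the theorem is a slip in the paper, not something you should try to reproduce.
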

\begin{proof}
We begin by considering the similar expression for $ B_1(\chi_1) $.
\begin{align*}
	\sum_{a=1}^{nm} \chi_1(a) a &= \sum_{k=0}^{m-1}\sum_{a=1}^{n} \chi_1(a+kn) (a+kn)\\
	&=\sum_{k=0}^{m-1}\sum_{a=1}^{n} \chi_1(a+kn) a + n \sum_{k=0}^{m-1}k\sum_{a=1}^{n} \chi_1(a+kn).
\end{align*}

The second term in the previous summation vanishes because of the perodicity of $ \chi_1 $ and \eqref{Equation "Character Vanish"}. Similarly, due to the periodicity of $ \chi_1 $ and \eqref{Equation "B1 formula"} the first term equals $ nmB_1(\chi_1) $. Therefore we have shown that,
\begin{equation}\label{Equation "D1D2B1 - 1"}
	nmB_1(\chi_1) = \sum_{a=1}^{nm} \chi_1(a) a.
\end{equation}

Furthermore from \eqref{Equation "B1 formula"} we have,
\begin{align*}
	nm\left(B_1(\chi) - B_1(\chi_1)\right) &= \left(\sum_{a=1}^{nm}\chi(a) a- \sum_{a=1}^{nm} \chi_1(a) a\right)\\
	&= \sum_{a=1}^{nm}\chi_1(a)a \left(\chi_2(a) - 1\right).
\end{align*}
In terms of $ I_1,I_2 $, this can be rewritten as
\begin{equation*}
	\sum_{a=1}^{nm}\chi_1(a)a \left(\chi_2(a) - 1\right) = \sum_{\alpha\in I_2} \left(\alpha - 1\right) \sum_{\underset{\chi_2(a) = \alpha}{a=1}}^{nm}\chi_1(a)a.
\end{equation*}
For every $ \alpha\in I_2 $, set 
\begin{equation}\label{Equation "Gamma alpha definition"}
	\Gamma_\alpha = \sum_{\underset{\chi_2(a) = \alpha}{a=1}}^{nm} \chi_1(a)a.
\end{equation}
In particular, if $ \alpha=0 $, then we have
\begin{align*}
	\Gamma_0 &= \sum_{k=1}^{n} \chi_1(km)km\\
	&= m \chi_1(m)\sum_{k=1}^{n}\chi_1(k)k\\
	&= \chi_1(m)nmB_1(\chi_1).
\end{align*}
Plugging everything together we get,
\begin{equation}\label{Equation "Bernoulli intermediate - "}
	nm\left(B_1(\chi) - \left(1-\chi_1(m)\right)B_1(\chi_1)\right) = \sum_{\alpha\in I_2\setminus\{0\}} (\alpha-1) \Gamma_\alpha.
\end{equation}
Similarly we have,
\begin{equation}\label{Equation "Bernoulli intermediate + "}
	nm\left(B_1(\chi) + \left(1 - \chi_1(m)\right)B_1(\chi_1)\right) = \sum_{\alpha\in I_2\setminus\{0\}} (\alpha+1) \Gamma_\alpha.
\end{equation}

Now, fix $ \alpha\in I_2\setminus\{0\} $ and consider $ \Gamma_\alpha $. We have,
\begin{align*}
	\Gamma_\alpha &= \sum_{\underset{\chi_2(a) = \alpha}{a=1}}^{nm} \chi_1(a)a\\
	&= \sum_{k=0}^{n-1} \sum_{\underset{\chi_2(a) = \alpha}{a=1}}^{m} \chi_1(a+km)(a+km)\\
	&= \sum_{k=0}^{n-1} \sum_{\underset{\chi_2(a) = \alpha}{a=1}}^{m} \chi_1(a+km)a + m\sum_{k=0}^{n-1} \sum_{\underset{\chi_2(a) = \alpha}{a=1}}^{m} \chi_1(a+km)k\\
	&= \sum_{\underset{\chi_2(a) = \alpha}{a=1}}^{m} a \sum_{k=0}^{n-1} \chi_1(a+km) + m\sum_{k=0}^{n-1} \sum_{\underset{\chi_2(a) = \alpha}{a=1}}^{m} \chi_1(a+km)k.
\end{align*}
In the first summation, as $ (n,m)=1 $, the map $ a\mapsto a+km $ is a bijection on the set $ \Z/n\Z $ and therefore the first sum vanishes for every $ a $. This gives us
\begin{align*}
	\Gamma_\alpha &= m\sum_{k=0}^{n-1} \sum_{\underset{\chi_2(a) = \alpha}{a=1}}^{m} \chi_1(a+km)k\\
	&= m\sum_{k=0}^{n-1} k \sum_{\underset{\chi_2(a) = \alpha}{a=1}}^{m} \chi_1(a+km).
\end{align*}
For an integer $ k $, let 
\begin{equation}\label{Equation "ck definition"}
	c_k (\alpha) = \sum_{\underset{\chi_2(a) = \alpha}{a=1}}^{m} \chi_1(a+km).
\end{equation}
It is clear that $ c_k(\alpha) = c_{k+n}(\alpha) $. Now from \eqref{Equation "Sk alpha definition"} we see that,
\begin{align*}\nonumber
	c_k(\alpha) &= \sum_{a=1}^{n} \chi_1(a)(\#\{km+1\leq b\leq (k+1)m\ |\  b\equiv a \mod (n), \chi_2(b)=\alpha\})\\
	&= \sum_{a=1}^{n} \chi_1(a)(\#\{1\leq b\leq m\ |\  b\equiv a - km \mod (n), \chi_2(b)=\alpha\})\\
	&= \sum_{a=1}^{n} \chi_1(a)\beta (a-km,\alpha).
\end{align*}
Since $ \beta(a,\alpha) $ is periodic in $ a $ with period $ n $, the expression on the right hand side of the above chain of equalities is invariant under the map $ a\mapsto a+1 $, since we are summing over all cosets modulo $ n $.
Hence we get
\begin{equation}\label{Equation "ck formula"}
	c_k(\alpha) = \sum_{a=1}^{n} \chi_1(a+km)\beta (a,\alpha).
\end{equation}
Finally we have a formula for $ B_1(\chi) $. Denote $ \Gamma_\alpha' := \Gamma_\alpha/m $.  Adding \eqref{Equation "Bernoulli intermediate - "} and \eqref{Equation "Bernoulli intermediate + "} we get,
\begin{align}\label{Equation "Temp2"}
	n B_1(\chi) &= \sum_{\alpha\in I_2} \alpha \Gamma_\alpha'\\ \nonumber
	&= \sum_{\alpha\in I_2} \alpha \sum_{k=0}^{n-1} k c_k(\alpha)\\ \label{Equation "Bernoulli general"}
	&= \sum_{\alpha\in I_2} \alpha \sum_{k=0}^{n-1} k \sum_{a=1}^{n} \chi_1(a+km)\beta(a,\alpha).
\end{align}

This completes the proof
\end{proof}

\begin{proof}[Proof of Theorem \ref{Theorem "Class Number main"}]
	Now we shall deduce Theorem \ref{Theorem "Class Number main"} from Theorem \ref{Theorem "General character sums"}. First we specialize to the case where $ \chi_1, \chi_2 $ are the Legendre symbols modulo $ n,m $ respectively so that $ \chi $ is the Legendre symbol modulo $ D $ in \eqref{Equation "Bernoulli general"}. It remains to prove that, under the assumptions of Theorem \ref{Theorem "Class Number main"}, $ \beta(a,\pm 1) = \sigma(-a,\pm 1) $ for any $ 0\leq a\leq n-1 $. We shall prove this for $ \sigma(-a,1) $, the other case being similar. From the definition of $ \beta $, for this particular case,
	\[
	\beta(a,1) = \#\left\{1\leq l\leq m\ |\ l\equiv a\mod n, \left(\frac{l}{m}\right) = 1\right\}.
	\]
	Since $ n $ is a primitive root modulo $ m $, we see that there exists a unique integer $ k $ such that $ 1\leq k\leq m-1 $ and $ l = [n^k]_m $. Furthermore, since $ n $ is a quadratic non residue modulo $ m $, $ l $ is a quadratic residue modulo $ m $ if and only if $ k $ is even. If $ l\equiv a\mod (n) $, then $ [n^k]_m\equiv a\mod n $ and from \eqref{Equation "Congruence of ak"}, $ ma_k(m)\equiv -a\mod n $. Consider this map $ l\mapsto k$ under this correspondence. It is easily verified that this is a bijection and therefore $ \beta(a,1) = \sigma(-a,1) $. Similarly one may also prove that $ \beta(a,-1) = \sigma(-a,-1) $. This completes the proof of Theorem \ref{Theorem "Class Number main"}.
\end{proof}

Furthermore, the symmetries satisfied by $ a_k(m) $ can be used to deduce certain relations among $ \sigma(a,1) $ and $ \sigma(n-a,1) $. Understanding these symmetries in the simplest case of $ n=3 $ leads to the proof of the corollaries to Theorem \ref{Theorem "Class Number main"}.

\subsection{Proof of Corollaries \ref{Corollary "Class Number corollary"} and \ref{Corollary "Class Number corollary 2"}}

From now onwards, we shall restrict ourselves to the setting of Theorem \ref{Theorem "Class Number main"}, in particular, we shall assume that $ n=3 $, and make the appropriate assumptions on $ m,\chi_1,\chi_2, \beta $ etc. As observed above, if $ 3 $ is a primitive root modulo $ m $, then $ m\equiv 2\mod 3 $.

Since $ m $ is a prime, we have $ \beta (2,0)=1 $ and $ \beta(0,0) = \beta(1,0)=0 $. This gives us
\[
\beta(a,1) + \beta(a,-1)= \begin{cases}
	\left\lfloor m/3\right\rfloor+1 &\mbox{ if } a = 1\\
	\left\lfloor m/3\right\rfloor &\mbox{ if } a\neq  1.
\end{cases}
\]
Then for $ \alpha\in I_2 $, from \eqref{Equation "ck formula"} we have,
\begin{align*}
	c_0(\alpha) &= \beta(1,\alpha) -\beta(2,\alpha)\\
	c_1(\alpha) &= -\beta(0,\alpha) + \beta(2,\alpha)\\
	c_2(\alpha) &= \beta(0,\alpha) -\beta(1,\alpha).
\end{align*}
Recall that $ (m-1)/2 $ is even as we have assumed $ m\equiv 1\mod 4 $. Rewriting the definition of $ \sigma(0,1) $ for our particular case, we get
\begin{align*}
	\sigma(0,1) &= \#\left\{1\leq k\leq (m-1)\ |\ a_k(m)\equiv 0 \mod 3\mbox{ and } 2\mid k  \right\}\\
	&= \#\left\{1\leq k\leq (m-1)\ |\ a_k(m)= 0\mbox{ and } 2\mid k  \right\},
\end{align*}
because $ 0\leq a_k(m)\leq 2 $. Call the set on the right hand side above as $ \Sigma_0 $. Similarly define
\[
\Sigma_2 := \left\{1\leq k\leq (m-1)\ |\ a_k(m)= 2\mbox{ and } 2\mid k  \right\}
\]
Define the map $ \phi : \Sigma_0 \to \Sigma_2 $ as
\[
\phi(k) := \begin{cases}
k+\frac{m-1}{2}&\mbox{if } k < (m-1)/2\\
k-\frac{m-1}{2}&\mbox{if } k > (m-1)/2.
\end{cases}
\]
From Lemma \ref{Lemma 5} we observe that $ \phi $ is well defined\footnote{We also see that $ (m-1)/2 \notin \Sigma_0 $.} and it is clear that $ \phi $ is a bijection. Now we observe that $ \sigma(1,1) = \# \Sigma_2 $ since $ m\equiv 2\mod 3 $. Hence $ \sigma(0,1) = \sigma(1,1) $. Similarly we can prove that $ \sigma(0,-1) = \sigma(1,-1) $. From the discussions leading to the proof of Theorem \ref{Theorem "Class Number main"} from Theorem \ref{Theorem "General character sums"} above, the above equalities can be rewritten in terms of $ \beta $ as $ \beta(0,\pm 1) = \beta(2, \pm 1) $. In particular $ c(1,1)=c(1,-1)=0 $ and hence
\[
\Gamma_{\pm 1}' = c_1(\pm 1) + 2c_2(\pm 1) = 2c_2(\pm 1).
\]
From \eqref{Equation "Temp2"} we see that 
\[
3B_1(\chi) = \Gamma_1'-\Gamma_{-1}'.
\]
In this case we have,
\begin{equation}\label{Equation "Case 1"}
	3B_1(\chi) = 2c_2(1) - 2c_2(-1).
\end{equation}
Observe that,
\[
\beta(0,1) + \beta(1,1) + \beta(2,1) = \#\left\{1\leq n\leq m\ |\ \chi_2(n)=1\right\} = \frac{m-1}{2}.
\]
Similarly,
\[
\beta(0,-1) + \beta(1,-1) + \beta(2,-1) = \#\left\{1\leq n\leq m\ |\ \chi_2(n)=-1\right\} = \frac{m-1}{2}.
\]
Now, define the following variables as in the two cases,
\begin{align*}
	x_1 &= \beta(0,1) = \beta(2,1) && x_2 = \beta(0,-1) = \beta(2,-1)\\
	x_3 &= \beta(1,1) && x_4 = \beta(1,-1)
\end{align*}
Now define the matrix $ A $ as 
\begin{equation}\label{Equation "Matrix A"}
	A:=\begin{pmatrix}
		1&1&0&0\\
		0&0&1&1\\
		2&0&1&0\\
		0&2&0&1
	\end{pmatrix}.
\end{equation}
Then it is clear that the variable $ x_1,x_2,x_3,x_4 $ satisfy
\begin{equation}\label{Equation "Linear System"}
	A\begin{pmatrix}
		x_1\\x_2\\x_3\\x_4
	\end{pmatrix}
	=
	\begin{pmatrix}
		\left\lfloor\frac{m-1}{3}\right\rfloor\\
		\left\lfloor\frac{m-1}{3}\right\rfloor+1\\
		\frac{m-1}{2}\\
		\frac{m-1}{2}
	\end{pmatrix}.
\end{equation}
The matrix $ A $ has rank equal to $ 3 $ and the kernel is spanned by the vector 
\begin{equation}\label{Equation "Nullity vector"}
	\textbf{v} = \begin{pmatrix}
		1\\-1\\-2\\2
	\end{pmatrix}.
\end{equation}
One particular solution in $ \Q $ of the system \eqref{Equation "Linear System"} is given by 
\[
\begin{pmatrix}
	x_1\\x_2\\x_3\\x_4
\end{pmatrix} = 
\begin{pmatrix}
	0\\
	\left\lfloor\frac{m-1}{3}\right\rfloor\\
	\frac{m-1}{2}\\
	\left\lfloor\frac{m-1}{3}\right\rfloor + 1 - \frac{m-1}{2}
\end{pmatrix}.
\]
Therefore any solution should be of the form 
\[
\begin{pmatrix}
	t\\
	-t+\left\lfloor\frac{m-1}{3}\right\rfloor\\
	-2t+\frac{m-1}{2}\\
	2t + \left\lfloor\frac{m-1}{3}\right\rfloor + 1 - \frac{m-1}{2}
\end{pmatrix},
\]
for some $ t $.

Now from \eqref{Equation "Case 1"} we see that,
\begin{align*}
	3B_1(\chi) &= 2 (x_1-x_3) - 2(x_2-x_4)\\
	&= 12t+2-2(m-1)\\
	&= 12\beta(0,1) + 2 - 2(m-1)\\
	&= 12\sigma(0,1) + 2 - 2(m-1).
\end{align*}
If we suppose that $ m = 3k+2 $ for some integer $ k $, then it follows that 
\[
h(3m) = 2k - 4\sigma(0,1).
\]
This completes the proof of Corollary \ref{Corollary "Class Number corollary"}. Corollary \ref{Corollary "Class Number corollary 2"} clearly follows from here.

\section{Concluding Remarks}\label{Section "Concluding Remarks"}

Theorem \ref{Theorem "Rudnick"} concerns only the classical sum of digits. Many twisted sums such as those considered in the present paper as well as those considered by Girstmair \cite{Girstmair 2} may have non trivial relations in the function field case. This will be worth exploring.

The congruence conditions in Theorems \ref{Theorem "Class Number main"} ($ n\equiv 3 \mod 4 $, $ m\equiv 1 \mod 3 $) \ref{Theorem "Congruence for S"} ($ m\equiv 3 \mod 4 $) is necessary because otherwise many of the sums of digits considered will vanish. It would be interesting to find relations among the digits and class number in this case as well.

The growth of $ \sigma (a,\alpha) $ is of independent interest in itself. This may lead to very nice interpretations for the class number. For example, Corollary \ref{Corollary "Class Number corollary 3"} can be used to interpret the class number as a measure of dependence of the events ``$ l $ is divisible by $ 3 $" and ``$ l $ is a quadratic residue modulo $ m $". Indeed starting from estimates for $ \sigma(a,\alpha) $s, it would be interesting to reprove certain classical results on the size of the class numbers of imaginary quadratic fields.

\end{document}